\theoremstyle{plain}
\newtheorem{thm}{Theorem}
\newtheorem{lem}{Lemma}
\newtheorem{prop}{Proposition}
\newtheorem{cor}{Corollary}
\theoremstyle{remark}
\newtheorem{rem}{Remark}
\def\Fl{F(A,\lambda)}  \def\Fe{F(A,\epsilon)}  
    \def\Fm{F(A,\mu)}
\def\<{\langle}  \def\>{\rangle}      % Redefinition von \<  und \>
\def\la{\langle} \def\ra{\rangle}
\def\n#1{\Vert #1\Vert}
\def\N {{\mathbb N}}    \def\R {{\mathbb R}}
\def\conrems{{\it Concluding Remarks.\ }}
  \def\Lem#1{Lemma\ {#1}}
\def\z{\\[0.1cm]}  \def\zz{\\[0.2cm]}  
\def\d{\vspace*{0.1cm}}  \def\dd{\vspace*{0.2cm}} 
  \def\ddddd{\vspace*{0.5cm}}
\begin{document}
\title[\tiny A second look at a geometric proof of the spectral theorem] {A Second Look at \\
 {\it \small  A Geometric Proof of the Spectral Theorem for Unbounded Self-Adjoint Operators} }
\author{\tiny Herbert Leinfelder}
\address{Herbert Leinfelder, Nuremberg Institute of Technology, Germany}
\email{herbert.leinfelder@th-nuernberg.de}

\begin{abstract}A new geometric proof of the spectral theorem for unbounded self-adjoint operators $A$ in a Hilbert space $H$ is given based on a splitting of $A$ in positive and negative parts $A_+\geq 0$ and $A_-\leq 0$. For both operators $A_+$ and $A_-$ the spectral family can be defined immediately and then put together to become the spectral family of $A$. Of course crucial methods and results of \cite{Leinfelder1979} are used.
\end{abstract}
\maketitle
The underlying note is a second look at my article {\it A Geometric Proof of the Spectral Theorem for Unbounded Self-Adjoint Operators} that appeared 1979 in the Math.\,Ann.\,{\bf 242} \cite{Leinfelder1979}. A second look means that we concentrate in this note on semi-bounded self-adjoint operators in a Hilbert space $H$. For semi-bounded self-adjoint operators $A$, say $A\geq 0$, we can define the spectral family $(E(\lambda))_{\lambda\in\R}$ immediately by setting
$$ E(\lambda) = P_{\Fl}\ \ \ \ (\lambda\in\R)\, , $$
where $P_{\Fl}$ is the projection of $H$ onto the subspace
$$ \Fl = \{ x\,|\, x\in D(A^n),\ \n{A^n x}\leq \lambda^n \n{x}\ \ (n\in\N)\}.$$
The general situation of a totally unbounded (unbounded from above and from below) self-adjoint operator $A$ is handled by splitting the operator $A$ in two semi-bounded operators $A_+\geq 0$ and $A_-\leq 0$ such that $A = A_-\oplus A_+$. Here again a (with respect to $A$) reducing subspace $F(B+\beta,\beta)$ with $B=A(1+A^2)^{-1}$ and $B+\beta\geq 0$ is of importance.\z
Moreover we use the possibility to present certain improved and extended lemmas of \cite{Leinfelder1979}, namely \cite[Lemma 1]{Leinfelder1979} and \cite[Lemma 4]{Leinfelder1979}.
\z
Concerning definitions, notations and basic results of Hilbert spaces and in particular of the calculus of projections we refer the reader to \cite{Weidmann1980} \footnote{Notice that our scalar products are linear in the first argument.}.
\zz
We now recall those results of \cite{Leinfelder1979} that are used decisively in this note. Notice that an inspection of the proofs of these results in \cite{Leinfelder1979} shows that the basic assumption \lq$A$ symmetric\rq\ may be weakened to \lq$A$ Hermitian\rq.
\begin{lem}\label{L1}
Suppose A is a closed Hermitian operator in the Hilbert space $H$ and $\epsilon,\delta,\lambda$ are non-negative real numbers. Then
\begin{enumerate}
\item $\Fl$ is a subspace of $H$, which is left invariant by the operator $A$.
\item Every bounded linear operator $B$ satisfying $B\, A \subset A\, B$ maps $\Fl$ into itself. Similarly we have $B^*(\Fl)^\perp \subset \Fl^\perp$.
\item  $F(A+\delta,\epsilon) \subset F(A,\delta+\epsilon)$ \ and\ \ $F(A^2,\epsilon^2)= \Fe$.
\end{enumerate}
\end{lem}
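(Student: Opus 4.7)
My plan is to isolate a simple self-improving estimate as the core technical tool: if $A$ is Hermitian and $y\in D(A^n)$ for every $n$ satisfies $\n{A^n y}\leq C\lambda^n$ for some constant $C$, then in fact $\n{A^n y}\leq \n{y}\lambda^n$. The proof of this \emph{bootstrap} rests on the Hermitian identity $\n{A^n y}^2=\langle y,A^{2n}y\rangle\leq \n{y}\n{A^{2n}y}$, which reads $a_n^2\leq \n{y}\,a_{2n}$ with $a_n:=\n{A^n y}$; iterating $k$ times and using the crude bound $a_{n\cdot 2^k}\leq C\lambda^{n\cdot 2^k}$ yields $a_n\leq \n{y}^{1-2^{-k}}C^{2^{-k}}\lambda^n$, which tends to $\n{y}\lambda^n$ as $k\to\infty$.

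With the bootstrap in hand, (i) will be handled as follows. Closure of $\Fl$ under scalar multiples is immediate. For $x,y\in\Fl$, the triangle inequality gives $\n{A^n(x+y)}\leq (\n{x}+\n{y})\lambda^n$, and the bootstrap refines this to $\n{A^n(x+y)}\leq \n{x+y}\lambda^n$. Invariance under $A$ follows by applying the bootstrap to $Ax$ in place of $y$, starting from the crude estimate $\n{A^n(Ax)}=\n{A^{n+1}x}\leq \lambda\n{x}\cdot\lambda^n$. Closedness of $\Fl$ then comes essentially for free: if $\Fl\ni x_m\to x$, then $x_m-x_\ell\in\Fl$ by linearity, so $\n{A(x_m-x_\ell)}\leq \lambda\n{x_m-x_\ell}$, making $(Ax_m)$ Cauchy; closedness of $A$ gives $x\in D(A)$ with $\n{Ax}\leq \lambda\n{x}$. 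Iterating (using that $Ax_m\in\Fl$ by invariance, so the same Cauchy argument controls $A^2x_m$, and so on) yields $x\in D(A^n)$ with $\n{A^n x}\leq \lambda^n\n{x}$.

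For (ii), I will first promote $BA\subset AB$ inductively to $BA^n\subset A^n B$, so that for $x\in\Fl$ one has $A^n(Bx)=B(A^n x)$ and hence $\n{A^n(Bx)}\leq \n{B}\n{x}\lambda^n$; the bootstrap (with $C=\n{B}\n{x}$) then yields $Bx\in\Fl$. The statement about $B^*$ is a one-line consequence: for $y\in\Fl^\perp$ and any $x\in\Fl$, $\langle B^*y,x\rangle=\langle y,Bx\rangle=0$ since $Bx\in\Fl$.

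Finally, for (iii), the inclusion $F(A+\delta,\epsilon)\subset F(A,\delta+\epsilon)$ will be a direct application of the binomial formula $A^n=((A+\delta)-\delta)^n$ together with the triangle inequality. For $F(A^2,\epsilon^2)=\Fe$, the inclusion $\Fe\subset F(A^2,\epsilon^2)$ is immediate from the definition; for the reverse inclusion the even powers are controlled by hypothesis, and the odd powers by the Hermitian identity $\n{A^{2n+1}x}^2=\langle A^{2n}x,A^{2n+2}x\rangle\leq \n{A^{2n}x}\n{A^{2n+2}x}\leq \epsilon^{4n+2}\n{x}^2$. The main obstacle is recognizing and setting up the bootstrap step; once it is isolated as a stand-alone device, each of the three items reduces to a short calculation.
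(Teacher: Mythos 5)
Your proposal is correct, and its engine --- the Cauchy--Schwarz self-improvement $\|A^n y\|^2=\langle y,A^{2n}y\rangle\le\|y\|\,\|A^{2n}y\|$, iterated along powers of two to upgrade a crude bound $\|A^ny\|\le C\lambda^n$ to $\|A^ny\|\le\|y\|\lambda^n$ --- is precisely the device on which the proof of parts i) and ii) in the cited 1979 paper rests; the present paper simply defers those parts to that reference, so your write-up is in effect a self-contained reconstruction rather than a genuinely new route. The small real differences are in iii): for $F(A+\delta,\epsilon)\subset F(A,\delta+\epsilon)$ the paper argues by induction, using that $F(A+\delta,\epsilon)$ is invariant under $A$ and writing $A^{n+1}x=(A+\delta)A^nx-\delta A^nx$, whereas you expand $A^n=((A+\delta)-\delta)^n$ binomially --- equivalent, though you should record the routine identification $D((A+\delta)^n)=D(A^n)$ that legitimizes the term-by-term expansion; for $F(A^2,\epsilon^2)\subset F(A,\epsilon)$ the paper controls every power at once by the single estimate $\|A^nx\|^2=\langle x,A^{2n}x\rangle\le\|x\|\,\|(A^2)^nx\|\le\|x\|^2\epsilon^{2n}$, while you handle even powers trivially and odd powers by the analogous identity --- again equivalent, just slightly longer. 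One merit of your version is that you prove closedness of $F(A,\lambda)$ explicitly (needed, since \emph{subspace} here means closed subspace, so that the projection $P_{F(A,\lambda)}$ used later exists), and this is exactly where the hypothesis that $A$ is closed enters; making that visible is a point in your favour.
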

\begin{proof}
For the proof of i) and ii) we refer to \cite[Lemma 1]{Leinfelder1979}.
To prove the first inclusion of iii) we know by property i) of Lemma \ref{L1} that $F(A+\delta,\epsilon)$ is invariant under $A+\delta$ and thus also under $A$ itself. Hence for $x\in F(A+\delta,\epsilon)$ we may conclude $A^nx \in F(A+\delta,\epsilon)$ for all
$n \in \N$, i.e $x \in D^\infty(A) = \bigcap_{n=1}^\infty D(A^n)$ and
\begin{eqnarray*}
\n{Ax} \leq \n{(A+\delta)x}+\delta\n{x}\leq (\epsilon+\delta)\n{x}.
\end{eqnarray*}
Suppose $ \n{A^nx} \leq (\epsilon+\delta)^n \n{x}$ is valid for given $n\in\N$ and $x\in F(A+\delta,\epsilon)$, then
\begin{eqnarray*}
\n{A^{n+1}x} &=& \n{A A^nx}\, = \, \n{(A+\delta)A^nx -\delta A^nx} \\
                      \, & \leq &\,  \epsilon\n{A^nx} +\delta\n{A^nx}\, = \,(\epsilon+\delta)\n{A^nx} \\
                      \, & \leq &\, (\epsilon+\delta)^{n+1} \n{x}.
\end{eqnarray*}
Thus by induction on $n\in\N$ we see that $x\in F(A,\delta+\epsilon)$.
\zz
To prove the second identity of iii) we first remark that $A^2$ is a closed operator .
Clearly $D^\infty(A^2) = D^\infty(A)$ and for $x\in F(A^2,\epsilon^2)$ we have
\begin{eqnarray*}
\n{A^n x}^2 \, & = & \, \la A^nx,A^nx \ra \,\, = \,\, \la x,A^{2n}x \ra \\
         \, & \leq & \, \n{x}\,\, \n{(A^2)^n x}\, \leq \, \n{x}\, (\epsilon^2)^n \n{x},
\end{eqnarray*}
which gives $\n{A^nx} \leq \epsilon^n \n{x}$ and thus $x\in F(A,\epsilon)$. This gives $F(A^2,\epsilon^2) \subset \Fe$. The inverse inclusion $\Fe \subset F(A^2,\epsilon^2)$ is obvious. \qed
\end{proof}
\begin{lem}\label{L2}
Suppose $A$ is a Hermitian operator in a finite dimensional Hilbert space $H$. Then we have for all $x\in\Fl^\perp, x\neq 0$, and all $\lambda\geq 0$:
\begin{enumerate}
  \item $\n{Ax} > \lambda\, \n{x}$,
  \item $\la Ax,x\ra > \lambda\, \la x,x\ra$, \ provided $A\geq 0$
\end{enumerate}
\end{lem}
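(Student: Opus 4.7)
In finite dimensions the Hermitian hypothesis makes $A$ self-adjoint in the elementary sense, so I would use the finite-dimensional spectral theorem as a black box. The strategy is to describe $\Fl$ explicitly as the span of the eigenspaces with $|\mu|\leq\lambda$; once that is done, both inequalities drop out by a one-line eigenbasis computation.

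Concretely, I would fix an orthonormal eigenbasis $e_1,\dots,e_N$ with real eigenvalues $\mu_1,\dots,\mu_N$, write $x=\sum_j c_j e_j$, and compute $\n{A^n x}^2=\sum_j\mu_j^{2n}|c_j|^2$ against $\lambda^{2n}\n{x}^2=\lambda^{2n}\sum_j|c_j|^2$. Thus $x\in\Fl$ is equivalent to $\sum_j\mu_j^{2n}|c_j|^2\leq\lambda^{2n}\sum_j|c_j|^2$ for every $n\in\N$. If some $c_j\neq 0$ with $|\mu_j|>\lambda$, the left side grows like $|\mu_j|^{2n}$ and the inequality must fail for large $n$; conversely, if $c_j=0$ whenever $|\mu_j|>\lambda$, the inequality holds term by term. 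This pins down $\Fl=\mathrm{span}\{e_j:|\mu_j|\leq\lambda\}$ and hence $\Fl^\perp=\mathrm{span}\{e_j:|\mu_j|>\lambda\}$ (with the obvious reading of $\lambda=0$, where $\Fl=\ker A$).

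With this identification, (i) is immediate: for $0\neq x\in\Fl^\perp$, writing $x=\sum_{|\mu_j|>\lambda}c_j e_j$ with at least one $c_j\neq 0$ gives $\n{Ax}^2=\sum_{|\mu_j|>\lambda}\mu_j^2|c_j|^2>\lambda^2\sum|c_j|^2=\lambda^2\n{x}^2$. Statement (ii) follows in the same way: when $A\geq 0$ every $\mu_j\geq 0$, so $|\mu_j|>\lambda$ becomes $\mu_j>\lambda$, and $\la Ax,x\ra=\sum_{\mu_j>\lambda}\mu_j|c_j|^2>\lambda\n{x}^2$. The only delicate point --- and the main obstacle --- is the ``necessity'' half of the characterization of $\Fl$, which genuinely depends on the asymptotic behaviour of $\mu_j^{2n}$ as $n\to\infty$ and explains why the definition of $\Fl$ insists on the inequality for \emph{every} $n\in\N$ rather than just $n=1$.
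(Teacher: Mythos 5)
Your proof is correct, and it is worth noting that the paper itself does not reproduce an argument for this lemma at all: its ``proof'' is the citation of \cite[Lemma 2]{Leinfelder1979}, so your write-up supplies a complete, self-contained substitute rather than a variant of something printed here. Your route -- invoke the finite-dimensional spectral theorem, expand in an orthonormal eigenbasis, and show that $\Fl$ is exactly the span of the eigenvectors $e_j$ with $|\mu_j|\leq\lambda$ (equivalently, the range of the spectral projection of $A$ for $[-\lambda,\lambda]$), so that $\Fl^\perp$ carries only spectrum of modulus $>\lambda$ -- is sound in every step: sufficiency of the coefficient condition is termwise, necessity follows because $(|\mu_j|/\lambda)^{2n}\to\infty$ when $|\mu_j|>\lambda>0$ (and you correctly treat $\lambda=0$, where $F(A,0)=\ker A$), and the strict inequalities in i) and ii) then follow since every nonzero coefficient of $x\in\Fl^\perp$ sits on an eigenvalue with $\mu_j^2>\lambda^2$, resp.\ $\mu_j>\lambda$ when $A\geq 0$. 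Using the finite-dimensional spectral theorem as a black box is legitimate and not circular: the theorem being proved concerns unbounded operators in an arbitrary Hilbert space, this lemma is precisely the finite-dimensional anchor that Proposition \ref{P} transports to the general setting, and the diagonalization of a Hermitian (symmetric) operator on a finite-dimensional space is elementary -- and, in its real-symmetric form, also covers the real Hilbert spaces the paper emphasizes in its concluding remark. Two small caveats: you tacitly assume $A$ is defined on all of $H$, which is indeed the situation in which the lemma is applied (the operators $A_n:H_n\to H_n$ of Proposition \ref{P}); and if one wanted to stay closer in spirit to the ``geometric'' flavour of \cite{Leinfelder1979}, the same content can be obtained by a variational argument on the $A$-invariant subspace $\Fl^\perp$ instead of quoting diagonalization outright, but that is a matter of taste, not of correctness.
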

\begin{proof}
For a proof see \cite[Lemma 2]{Leinfelder1979}. \qed
\end{proof}
\begin{prop}\label{P}
Suppose $A$ is a closed Hermitian operator in a Hilbert space $H$ and $\lambda,\mu$ are non-negative real numbers. Then for every $x\in \Fm\cap\Fl^\perp$ there is a sequence $(A_n)$ of Hermitian operators $A_n : H_n \rightarrow H_n$ defined in finite dimensional subspaces $H_n\subset H$ and a sequence $(x_n)$ of Elements $x_n$ belonging to $H_n$ in such a way that
$$
x_n\in F(A_n,\lambda)^\perp \ \mbox{and}\ \lim_{n\to\infty}\n{x_n-x} = 0 = \lim_{n\to\infty}\n{Ax_n - Ax}.
$$
If $A\geq 0$ then $A_n$ can be chosen non-negative.
\end{prop}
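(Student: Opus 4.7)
The plan is to realize $A_n$ as the compression of $A$ to the Krylov-type subspaces generated by $x$. Since $x \in F(A,\mu) \subset D^\infty(A)$, the subspace $H_n := \mathrm{span}\{x, Ax, \ldots, A^n x\}$ is finite-dimensional; let $P_n$ be the orthogonal projection $H \to H_n$ and set $A_n := P_n A|_{H_n}$. Hermiticity follows from $\langle P_n Au, v\rangle = \langle Au, v\rangle = \langle u, Av\rangle$ for $u,v \in H_n$, and $A_n \geq 0$ whenever $A \geq 0$. A one-line induction gives the crucial identity $A_n^k x = A^k x$ for $0 \leq k \leq n$, since $A^{k+1}x \in H_n$ when $k+1 \leq n$. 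Now let $Q_n$ be the orthogonal projection within $H_n$ onto $F(A_n,\lambda)$ and put $y_n := Q_n x$, $x_n := x - y_n$. By construction $x_n \in F(A_n,\lambda)^\perp$, and the proposition reduces to proving $\|y_n\| \to 0$ and $\|A y_n\| \to 0$ in $H$.

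To establish these convergences I would invoke a moment-measure representation. The sequence $m_k := \langle A^k x, x\rangle$ is bounded ($|m_k| \leq \mu^k\|x\|^2$) and positive semidefinite (since $(m_{j+k})_{j,k} = (\langle A^j x, A^k x\rangle)_{j,k}$ by Hermiticity), so Hamburger furnishes a unique positive Borel measure $\rho$ on $[-\mu,\mu]$ with $m_k = \int t^k\,d\rho$; then $\|p(A)x\|^2 = \int p^2\,d\rho$ for real polynomials $p$, and by Stone--Weierstrass this extends to a continuous calculus $f \mapsto f(A)x$ for $f \in C[-\mu, \mu]$. Diagonalizing $A_n$ yields a discrete measure $\rho_n := \sum_j |c_j^{(n)}|^2 \delta_{\nu_j^{(n)}}$ whose first $2n+1$ moments match those of $\rho$ (from $A_n^k x = A^k x$ combined with Hermiticity of $A_n$), hence $\rho_n \to \rho$ weakly on $[-\mu,\mu]$. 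The hypothesis $x \in F(A,\lambda)^\perp$ now forces $\rho|_{[-\lambda,\lambda]} = 0$: for any continuous $f$ supported in $[-\lambda, \lambda]$ the bound $\|A^m f(A)x\|^2 = \int t^{2m} f^2\,d\rho \leq \lambda^{2m}\|f(A)x\|^2$ places $f(A)x \in F(A,\lambda)$, whence $0 = \langle x, f(A)x\rangle = \int f\,d\rho$.

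To conclude, write $Q_n = q_n(A_n)$ for a polynomial $q_n$ of degree $\leq n$ (Lagrange interpolation of $\chi_{[-\lambda,\lambda]}$ at the eigenvalues of $A_n$), so $y_n = q_n(A_n) x = q_n(A) x$. Since $q_n \in \{0,1\}$ on the spectrum of $A_n$, $\|y_n\|^2 = \int q_n^2\,d\rho_n = \rho_n([-\lambda,\lambda])$, and by Portmanteau (using $\rho(\{\pm\lambda\}) = 0$) this tends to $\rho([-\lambda,\lambda]) = 0$, giving $\|y_n\| \to 0$. Moreover, using moment matching up to degree $2n$ together with $\mathrm{supp}(\rho) \subset [-\mu,\mu]$, $\|A y_n\|^2 = \int (tq_n)^2\,d\rho \leq \mu^2 \int q_n^2\,d\rho = \mu^2\|y_n\|^2 \to 0$. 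The main obstacle is the setup itself: constructing $\rho$ and its continuous calculus $f \mapsto f(A)x$ without a spectral theorem in hand, and in particular verifying $f(A)x \in F(A,\lambda)$ for $f$ supported in $[-\lambda, \lambda]$ --- an independent moment-theoretic interlude, but routine once $\rho$ has been identified via Hamburger.
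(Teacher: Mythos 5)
Your construction is sound in its skeleton: the Krylov spaces $H_n$, the compressions $A_n=P_nA|_{H_n}$ (Hermitian, non-negative when $A\geq 0$), the identity $A_n^kx=A^kx$ for $k\leq n$, the identification of the projection onto $F(A_n,\lambda)$ with $q_n(A_n)$ and hence $y_n=q_n(A)x$, the moment measure $\rho$ with $\n{p(A)x}^2=\int p^2\,d\rho$, the continuous calculus obtained from closedness of $A$, and the weak convergence $\rho_n\to\rho$ (for which you should note explicitly that $H_n\subset \Fm$ by Lemma \ref{L1} i), so $\n{A_n}\leq\mu$ and all $\rho_n$ live on the common compact $[-\mu,\mu]$). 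The genuine gap is the claim that $x\in\Fl^\perp$ forces $\rho|_{[-\lambda,\lambda]}=0$. Testing against continuous $f$ supported in $[-\lambda,\lambda]$ only yields $\rho((-\lambda,\lambda))=0$: in the nontrivial case $\lambda<\mu$ such an $f$ necessarily vanishes at $\pm\lambda$ (continuity at the endpoints), so your argument says nothing about the atoms $\rho(\{\lambda\})$, $\rho(\{-\lambda\})$. This is not cosmetic, because your concluding step $\limsup_n\rho_n([-\lambda,\lambda])\leq\rho([-\lambda,\lambda])$ needs exactly the vanishing of these atoms: if, say, $\rho=\n{x}^2\delta_\lambda$ were possible, then $\rho_n([-\lambda,\lambda])\not\to 0$ and the proof collapses; ruling this out is precisely the point where the hypothesis $x\perp\Fl$ must be used once more, and as written it has not been.

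The gap is repairable inside your own framework. Take tent functions $f_\epsilon$ with $f_\epsilon(\lambda)=1$ and support in $[\lambda-\epsilon,\lambda+\epsilon]$. Then $f_\epsilon\to\chi_{\{\lambda\}}$ in $L^2(\rho)$, so $v:=\lim_{\epsilon\to 0}f_\epsilon(A)x$ exists; since $\n{A^mf_\epsilon(A)x}^2=\int t^{2m}f_\epsilon^2\,d\rho\to\lambda^{2m}\rho(\{\lambda\})$ and $\n{f_\epsilon(A)x}^2\to\rho(\{\lambda\})$, the closedness of $A$ (applied iteratedly, as in your calculus step) gives $v\in D^\infty(A)$ with $\n{A^mv}=\lambda^m\n{v}$, i.e. $v\in\Fl$; hence $0=\la x,v\ra=\lim_\epsilon\int f_\epsilon\,d\rho=\rho(\{\lambda\})$, and likewise at $-\lambda$. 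With this addendum your argument is complete, but be aware that it is a genuinely different and much heavier route than the one the paper relies on: the paper simply defers to \cite[Lemma 3]{Leinfelder1979}, which works with the same finite-dimensional compressions but finishes by elementary Hilbert-space (weak-compactness) arguments, whereas you import the Hamburger moment problem, weak convergence of measures and a Portmanteau-type estimate --- not circular (these do not presuppose the spectral theorem), but contrary to the elementary geometric spirit the lemma is meant to serve.
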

\begin{proof} For a proof see \cite[Lemma 3]{Leinfelder1979}. \qed
\end{proof}
\begin{lem}\label{L3}
Let $A$ be a closed Hermitian operator in $H$, and $\lambda,\mu $ nonnegative real numbers. Then for all
$x\in\Fm\cap\Fl^\perp$
\begin{enumerate}
  \item $\lambda\, \n{x} \leq \n{Ax} \leq \mu\,\n{x}$,
  \item $\lambda\, \la x,x\ra \leq \la Ax,x\ra \leq \mu\, \la x,x\ra$, provided $A\geq 0$.
\end{enumerate}
\end{lem}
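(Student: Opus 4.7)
The upper bounds $\|Ax\|\leq\mu\|x\|$ and (when $A\geq 0$) $\la Ax,x\ra\leq\mu\la x,x\ra$ are essentially free from the definition of $F(A,\mu)$: taking $n=1$ in the defining inequality already yields $\|Ax\|\leq\mu\|x\|$, and then Cauchy--Schwarz together with $A\geq 0$ gives $0\leq\la Ax,x\ra\leq\|Ax\|\|x\|\leq\mu\|x\|^2$. So only the lower bounds require genuine work, and the case $x=0$ being trivial I would assume $x\neq 0$ throughout.

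For the lower bounds I would invoke Proposition \ref{P} to obtain the finite-dimensional Hermitian operators $A_n:H_n\to H_n$ (non-negative if $A\geq 0$) and vectors $x_n\in H_n$ with $x_n\in F(A_n,\lambda)^\perp$, $x_n\to x$ and $Ax_n\to Ax$. Since $x\neq 0$ we may assume $x_n\neq 0$ for all sufficiently large $n$, and then Lemma \ref{L2} applied to $A_n$ on $H_n$ delivers the strict inequalities
\begin{eqnarray*}
 \|A_n x_n\| &>& \lambda\,\|x_n\|, \\
 \la A_n x_n, x_n\ra &>& \lambda\,\la x_n,x_n\ra \quad \text{(in the case } A\geq 0\text{)}.
\end{eqnarray*}

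The remaining link is to replace $A_n x_n$ by $Ax_n$ so that the convergence provided by Proposition \ref{P} can be used. Using that the $A_n$ produced by the construction in \cite{Leinfelder1979} are compressions $A_n = P_{H_n} A|_{H_n}$, we have $\|A_n x_n\|\leq\|Ax_n\|$ and, since $x_n\in H_n$, $\la A_n x_n, x_n\ra = \la Ax_n, x_n\ra$. Passing to the limit $n\to\infty$ — with $\|x_n\|\to\|x\|$, $\|Ax_n\|\to\|Ax\|$ and $\la Ax_n,x_n\ra\to\la Ax,x\ra$, the last two being immediate consequences of $Ax_n\to Ax$ and $x_n\to x$ — turns the strict finite-dimensional bounds into the desired (non-strict) inequalities $\lambda\|x\|\leq\|Ax\|$ and $\lambda\la x,x\ra\leq\la Ax,x\ra$.

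The main obstacle, such as it is, lies in this last bridge: Proposition \ref{P} controls $Ax_n$ rather than $A_n x_n$, so one has to make explicit use of the fact that the approximating operators arise as compressions of $A$ on the constructed invariant-like finite-dimensional subspaces. Once that identification is in hand, the proof is a routine limit argument; the transition from the strict inequalities of Lemma \ref{L2} to the non-strict ones in the statement is precisely what this limit allows.
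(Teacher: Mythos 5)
Correct, and essentially the paper's own argument: the upper bounds are read off from the definition of $\Fm$, and the lower bounds follow from the strict inequalities of Lemma \ref{L2} combined with the limiting process of Proposition \ref{P}, which is exactly the combination the paper invokes. Your extra step bridging $A_nx_n$ and $Ax_n$ through the compression structure of the construction in \cite{Leinfelder1979} simply makes explicit a detail that the paper's one-sentence proof leaves implicit.
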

\begin{rem}
Note that Lemma \ref{L3} i) can be written in the equivalent form
\begin{equation*}
    \lambda^2\, \la x,x\ra \leq \la A^2x,x\ra \leq \mu^2\, \la x,x\ra.
\end{equation*}
\end{rem}\d
\begin{proof}
For $x\in\Fm$ the inequalities $\n{Ax} \leq \mu\,\n{x}$ and $\la Ax,x\ra \leq \mu\, \la x,x\ra$ follow directly from the definition of $\Fm$ whereas for $x\in\Fm\cap\Fl^\perp$ the inequalities $\lambda\, \n{x} \leq \n{Ax}$ and $\lambda\, \la x,x\ra \leq \la Ax,x\ra$ can be derived from  Lemma \ref{L2} using the limiting process described in Proposition \ref{P}. \qed
\end{proof}\d
\begin{lem}\label{L4}
Let $A$ be a closed Hermitian operator in a Hilbert space $H$.
Then the following statements are equivalent:
\begin{enumerate}
\item $\bigcup \limits_{\epsilon>0} F(A,\epsilon)$ is dense in $H$.
\item $A$ is self-adjoint.
\end{enumerate}
\end{lem}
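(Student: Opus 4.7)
The two implications have very different flavors, and I plan to treat them separately.

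For (i) $\Rightarrow$ (ii), the algebraic core is a projection argument. Because $F(A,\epsilon)\subset D(A)$ for every $\epsilon>0$, the density hypothesis already forces $D(A)$ to be dense, so $A^*$ is well defined; since $A$ is closed and Hermitian, $A\subset A^*$ is automatic, and only $D(A^*)\subset D(A)$ remains to be shown. For a fixed $y\in D(A^*)$ I let $P_\epsilon$ be the orthogonal projection onto $F(A,\epsilon)$, and the key step is to establish the intertwining identity
\begin{equation*}
A P_\epsilon y \;=\; P_\epsilon A^* y .
\end{equation*}
I plan to verify this by pairing separately against $z\in F(A,\epsilon)$ --- where Hermiticity of $A$ together with the $A$-invariance of $F(A,\epsilon)$ from Lemma~\ref{L1}~i) reduces both sides to $\langle A^*y,z\rangle$ --- and against $z\in F(A,\epsilon)^\perp$, where both sides vanish because $A P_\epsilon y$ and $P_\epsilon A^*y$ lie in $F(A,\epsilon)$. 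The spaces $F(A,\epsilon)$ are monotone increasing in $\epsilon$, so the density assumption gives $P_\epsilon\to I$ strongly; hence $P_\epsilon y\to y$ and simultaneously $A P_\epsilon y=P_\epsilon A^*y\to A^*y$ in norm, and closedness of $A$ then delivers $y\in D(A)$ with $Ay=A^*y$.

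For (ii) $\Rightarrow$ (i), set $M:=\overline{\bigcup_{\epsilon>0}F(A,\epsilon)}$ and $N:=M^\perp$; the goal is $N=\{0\}$. The first step is to recognize $N$ as a reducing subspace for $A$. Since $A=A^*$, both resolvents $B=(A\pm i)^{-1}$ are bounded and satisfy $BA\subset AB$, so Lemma~\ref{L1}~ii) says that the adjoint $B^*=(A\mp i)^{-1}$ leaves every $F(A,\epsilon)^\perp$ invariant; intersecting over $\epsilon$ therefore shows that $N$ is invariant under both resolvents, which is the standard criterion for $N$ to reduce a self-adjoint operator. Consequently $A_N:=A|_{N\cap D(A)}$ is self-adjoint on $N$, and by construction $F(A_N,\epsilon)\subset F(A,\epsilon)\cap N=\{0\}$ for every $\epsilon>0$. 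The statement thus boils down to a purely structural claim: a nonzero self-adjoint operator cannot have all of its $F$-spaces trivial.

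This last reduction is where I expect the main obstacle. The cleanest route, matching the strategy announced in the abstract, is to descend to the semi-bounded companion $A_N^2\ge 0$ via the identity $F(A_N^2,\epsilon^2)=F(A_N,\epsilon)$ from Lemma~\ref{L1}~iii), and then to invoke the spectral-family construction $E(\lambda)=P_{F(A_N^2,\lambda)}$ that the paper develops directly for non-negative self-adjoint operators; an equivalent route would work with the bounded injective positive contraction $(I+A_N^2)^{-1}$. The delicate issue is to run either argument without circularity, since Lemma~\ref{L4} is itself an ingredient of the general spectral theorem --- only the semi-bounded portion of the theory proved beforehand, together with the tools imported from \cite{Leinfelder1979} at the start of the paper, are legitimately available at this stage.
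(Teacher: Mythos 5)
Your argument for i) $\Rightarrow$ ii) is correct: the intertwining identity $AP_\epsilon y=P_\epsilon A^*y$ for $y\in D(A^*)$, checked against $F(A,\epsilon)$ and $F(A,\epsilon)^\perp$ separately, together with $P_\epsilon\to I$ strongly and the closedness of $A$, does give $D(A^*)\subset D(A)$; this is in substance the argument of \cite{Leinfelder1979} that the paper simply cites for this direction. The problem is the converse. Your reduction to the reducing subspace $N=\bigl(\bigcup_{\epsilon>0}F(A,\epsilon)\bigr)^\perp$ via the resolvents and Lemma \ref{L1} ii) is legitimate, and it correctly isolates the claim that a self-adjoint operator on a nonzero space must have some nontrivial $F$-space. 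But the route you propose for that claim is circular: the construction $E(\lambda)=P_{F(A_N^2,\lambda)}$ is only shown to be a spectral family in the proof of Theorem \ref{T}, and the one nontrivial property there, $\lim_{\lambda\to\infty}E(\lambda)=I$, is obtained precisely by invoking Lemma \ref{L4} ii) $\Rightarrow$ i). Without that input nothing prevents all the projections $P_{F(A_N^2,\lambda)}$ from being zero, which is exactly the situation you need to exclude, so no contradiction can be extracted. Flagging the circularity does not remove it, and your fallback, ``an equivalent route would work with $(I+A_N^2)^{-1}$,'' is only named, not carried out; the entire difficulty of this direction lives in that unexecuted step.

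What is missing is the paper's key inclusion. For a self-adjoint $S\geq 1$ (the paper takes $S=A^2+1$, self-adjoint by \cite[Theorem 3.24]{Kato1976}) one proves $F(S^{-1},\epsilon^{-1})^\perp\subset F(S,\epsilon)$ for every $\epsilon>0$: by Lemma \ref{L1} ii) the bounded operator $S^{-1}$ maps $H_\epsilon=F(S^{-1},\epsilon^{-1})^\perp$ into itself, and Lemma \ref{L3} i), applied to $S^{-1}$ with $\lambda=\epsilon^{-1}$ and $\mu=\Vert S^{-1}\Vert$, shows that $S^{-1}|H_\epsilon$ is bijective with inverse $S|H_\epsilon$ of norm at most $\epsilon$, whence $\Vert S^nx\Vert\leq\epsilon^n\Vert x\Vert$ for $x\in H_\epsilon$. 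Taking orthogonal complements gives $\bigcap_{\epsilon>0}F(S,\epsilon)^\perp\subset\bigcap_{\epsilon>0}F(S^{-1},\epsilon^{-1})=N(S^{-1})=\{0\}$, so $\bigcup_{\epsilon>0}F(S,\epsilon)$ is dense, and Lemma \ref{L1} iii) transfers this to $A$ via $F(S,\epsilon)\subset F(A^2,\epsilon+1)=F(A,\sqrt{\epsilon+1})$. Note that with this argument your preliminary reduction to $N$ becomes unnecessary: the density is proved directly in $H$, using only Lemmas \ref{L1} and \ref{L3}, which is how the paper keeps the proof free of any appeal to the spectral construction it is meant to support.
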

\begin{proof}
The proof of $i) \Rightarrow ii)$  can be found in \cite[Lemma 4]{Leinfelder1979} and remains unchanged. To see $ii)\Rightarrow i)$ we present a new shortened and simplified proof.
\z
Suppose $A$ is self-adjoint and assume in addition that $A$ is bounded from below by 1, i.e. $A \geq 1$. Then $A^{-1}$ exists, is bounded and the following relation holds true:
\begin{eqnarray}\label{INV}
F(A^{-1},\epsilon^{-1})^\perp \subset F(A,\epsilon)\ \ \ (\epsilon>0)
\end{eqnarray}
Assume for the moment that (\ref{INV}) is already proven then
\begin{eqnarray*}%\label{INC}
   \Biggl(\bigcup \limits_{\epsilon>0} F(A,\epsilon)\Biggr)^\perp \, = \, \bigcap\limits_{\epsilon>0}F(A,\epsilon)^\perp \, \subset \, \bigcap \limits_{\epsilon>0} F(A^{-1},\epsilon^{-1})\,=\, N(A^{-1})\, = \, \{0\}
\end{eqnarray*}
and $\bigcup \limits_{\epsilon>0} F(A,\epsilon)$ is dense in $H$.

For general $A$ we consider the self-adjoint operator $S = A^2+1 \geq 1$ being self-adjoint in view of \cite[Theorem 3.24]{Kato1976}. We use the inclusions iii) in Lemma \ref{L1} to get
\begin{eqnarray*}
    F(S,\epsilon) \, \subset \, F(A^2,\epsilon+1)\, \subset \,  F(A,\sqrt{\epsilon+1}) \ \ \ (\epsilon > 0)
\end{eqnarray*}
and the density of $\bigcup \limits_{\epsilon>0} F(A,\epsilon)$ in $H$ is proven. \z
To end the proof we have to justify inclusion (\ref{INV}). In order to show (\ref{INV}) we consider
$H_\epsilon = F(A^{-1},\epsilon^{-1})^\perp$ and $B_\epsilon = A^{-1}|H_\epsilon$. Applying assertion $ii)$ of \Lem{1} we have $B_\epsilon(H_\epsilon) \subset H_\epsilon$
and using assertion $i)$ of Lemma 3 with $A$ replaced by $A^{-1}$, $\lambda = \epsilon^{-1}$ and $\mu = \n{A^{-1}}$ we conclude that $B_\epsilon\, :H_\epsilon \longrightarrow H_\epsilon $ is bijective and $B_\epsilon^{-1} = A|H_\epsilon$ with\  $\n{A|H_\epsilon} \leq \epsilon$.\ Hence $\n{(A|H_\epsilon)^n} \leq \epsilon^n$ for $n\in\N$ and consequently
$\n{A^nx} \leq \epsilon^n \, \n{x}$ for $x\in H_\epsilon$. It follows $x \in F(A,\epsilon)$ which implies inclusion (\ref{INV}). \qed
\end{proof}\dd
\begin{rem}\label{rem}
A closed Hermitian operator $A$ is self-adjoint if and only if $A^2$ is self-adjoint.
\end{rem}\d
\begin{proof}
This follows from Lemma \ref{L4} and the identity $F(A^2,\epsilon^2) = \Fe$ being valid in view of Lemma \ref{L1} for all $\epsilon \geq 0$. Notice that for a closed Hermitian operator $A$ the operator $A^2$ is always closed.  \qed
\end{proof}
\begin{lem}\label{L4.5}
Suppose $A$ is a closed Hermitian operator in a Hilbert space H and $(P_n)_{n\in\N}$ an increasing sequence of projections such that $R(P_n)\subset D(A), AP_n = P_nAP_n$ and $P_n\to I$ strongly. Then the following assertions hold:
\begin{enumerate}
  \item $D(A) = \{x\in H \ |\ (AP_nx)\ converges \} = \{x\in H\ |\  (\n{AP_nx})\ converges\}$
  \item $ Ax = \lim_{n\to\infty} AP_nx\ for\ all\ x\in D(A) $
\end{enumerate}
\end{lem}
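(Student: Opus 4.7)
The plan is to exploit a commutation between $A$ and the projections $P_n$ in order to reduce matters to a Pythagorean identity. First I would observe that $T_n := AP_n$ is a bounded operator on all of $H$: since $A$ is closed and $R(P_n)$ is a closed subspace contained in $D(A)$, the closed graph theorem applied to $A|_{R(P_n)}$ yields boundedness of $T_n$. Moreover, $\bigcup_n R(P_n) \subset D(A)$ is dense in $H$ because $P_n \to I$ strongly, so $A$ is in fact densely defined Hermitian, i.e.\ symmetric.

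The key preliminary step is to promote the hypothesis $AP_n = P_n A P_n$ to the stronger commutation $P_n A x = AP_n x$ for every $x \in D(A)$. For $x,y \in D(A)$ one has $P_n x, P_n y \in R(P_n) \subset D(A)$ and hence
\begin{eqnarray*}
\la AP_n x, y\ra &=& \la P_n A P_n x, y\ra \,=\, \la AP_n x, P_n y\ra \,=\, \la P_n x, AP_n y\ra \\
&=& \la x, P_n AP_n y\ra \,=\, \la x, AP_n y\ra \,=\, \la Ax, P_n y\ra \,=\, \la P_n Ax, y\ra,
\end{eqnarray*}
using the Hermitian property, self-adjointness of $P_n$, and the hypothesis $AP_n = P_n A P_n$ applied on each side. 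Density of $D(A)$ then yields the equality of vectors.

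The decisive consequence is that for $m \leq n$ and any $x \in H$ the vector $P_n x$ lies in $D(A)$, so by the commutation just established $P_m T_n x = P_m A P_n x = A P_m P_n x = A P_m x = T_m x$. Hence $T_n x - T_m x = (I-P_m) T_n x$ is orthogonal to $T_m x = P_m T_n x$, and Pythagoras gives $\n{T_n x - T_m x}^2 = \n{T_n x}^2 - \n{T_m x}^2$. In particular $(\n{T_n x})_{n\in\N}$ is monotonically increasing, and its convergence is equivalent to its boundedness and also to the sequence $(T_n x)$ being Cauchy in $H$.

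This settles both assertions. If $x \in D(A)$, then $T_n x = AP_n x = P_n Ax \to Ax$ strongly, which proves ii) and forces $(\n{T_n x})$ to converge. Conversely, if $(\n{T_n x})$ converges then $(T_n x)$ is Cauchy, hence converges to some $y \in H$; since $P_n x \to x$ and $AP_n x \to y$, closedness of $A$ puts $x$ in $D(A)$ with $Ax = y$. I expect the main obstacle to be the commutation relation in the second step, where the hypothesis $AP_n = P_n A P_n$ must be combined nontrivially with both the Hermitian property and the density of $D(A)$; once that is in hand, the monotonicity identity and closedness of $A$ do the rest automatically.
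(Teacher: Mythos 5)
Your proof is correct: the promotion of $AP_n=P_nAP_n$ to $AP_nx=P_nAx$ on $D(A)$, the Pythagorean identity $\n{AP_nx}^2-\n{AP_mx}^2=\n{AP_nx-AP_mx}^2$ giving monotonicity and the Cauchy criterion, and the final appeal to closedness of $A$ together establish both i) and ii) without gaps. The paper itself gives no proof here but defers to \cite[Lemma 5]{Leinfelder1979}, and your argument is essentially that standard one, so nothing further is needed.
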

\begin{proof}
For a proof see \cite[Lemma 5]{Leinfelder1979}. \qed
\end{proof}
\begin{lem} \label{L5}
Suppose $A$ is a self-adjoint operator in a Hilbert space $H$. Then there exist subspaces $H_\pm \subset H$,  self-adjoint operators $A_\pm = A_{D(A)\cap H_\pm}$ in $H_\pm$ such that
\begin{equation}\label{deco}
    H = H_- \oplus H_+,\ A = A_- \oplus A_+ \mbox{\ and\ } A_- \leq\ 0\ \leq A_+ \, .
\end{equation}
\end{lem}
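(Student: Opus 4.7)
The plan is to realize the decomposition through the construction already announced in the abstract. Set
$$ Q := (1+A^2)^{-1}, \qquad B := AQ, \qquad C := B + 1/2, $$
all bounded self-adjoint with $0 < Q \le I$, $|B| \le 1/2$ and $0 \le C \le I$, and take $H_- := F(C,1/2)$, $H_+ := H_-^\perp$. Since $Q = (A-i)^{-1}(A+i)^{-1}$ commutes with $A$ on $D(A)$, a short computation yields $BQ = QB$, so both $B$ and $Q$ commute with $C$. Applying Lemma~\ref{L1}\,ii) with $C$ in the role of $A$ shows that $B$ and $Q$ leave $H_-$ invariant, and their self-adjointness gives the same for $H_+$. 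The algebraic identities $(A \mp i)^{-1} = B \pm iQ$ then force each $H_\pm$ to be invariant under both resolvents $(A \pm i)^{-1}$, i.e.\ the projections $P_\pm$ onto $H_\pm$ commute with the resolvents of $A$; this is the standard criterion for $H_\pm$ to reduce $A$. I then define $A_\pm := A|_{D(A) \cap H_\pm}$, which are self-adjoint on $H_\pm$ with $A = A_- \oplus A_+$.

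Next I would prove the signs. On $H_-$ the defining inequality of $F(C,1/2)$ gives $\|C|_{H_-}\| \le 1/2$, and with $C \ge 0$ this yields $0 \le C|_{H_-} \le (1/2)\,I$, so $B|_{H_-} \le 0$. On $H_+$, since $\|C\| \le 1$ we have $F(C,1) = H$, and Lemma~\ref{L3}\,ii) applied to $C \ge 0$ with $\mu = 1$, $\lambda = 1/2$ gives $\langle Cx, x\rangle \ge (1/2)\|x\|^2$ for $x \in H_+$, so $B|_{H_+} \ge 0$. The central task is transferring the sign of $B$ to the sign of $A$. For this I restrict to $V_\epsilon := F(A,\epsilon)$, invariant under $A$ by Lemma~\ref{L1}\,i), on which $A$ is bounded self-adjoint; then the bounded positive square root $R := (1+A^2)^{1/2}|_{V_\epsilon}$ is available, is bijective on $V_\epsilon$, commutes with $A$, and leaves each $H_\pm$ invariant (being a function of $A^2$). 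The substitution $x = R^{-1} y$ produces
$$ \langle Ax, x\rangle = \langle R^{-1} A R^{-1} y, y\rangle = \langle A R^{-2} y, y\rangle = \langle By, y\rangle, $$
so on $H_\pm \cap V_\epsilon$ the sign of $A$ matches that of $B$, i.e.\ $A|_{H_- \cap V_\epsilon} \le 0$ and $A|_{H_+ \cap V_\epsilon} \ge 0$.

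Finally I extend the sign inequalities from $\bigcup_{\epsilon > 0} F(A_\pm, \epsilon) = \bigcup_{\epsilon > 0}(H_\pm \cap V_\epsilon)$ to all of $D(A_\pm)$. Lemma~\ref{L4} applied to the self-adjoint $A_\pm$ gives density of this union in $H_\pm$; a further application of Lemma~\ref{L1}\,ii), now with the resolvents $(A_\pm \pm i)^{-1}$ in the role of the bounded operator, shows that the projections onto each $F(A_\pm, \epsilon)$ reduce $A_\pm$, which makes the union a core for $A_\pm$. Approximating $x \in D(A_\pm)$ in the graph norm and invoking continuity of the quadratic form then yields $A_- \le 0$ and $A_+ \ge 0$. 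The principal obstacle is the sign-transfer step: the function $t \mapsto t/(1+t^2)$ preserves sign but is not invertible, so $A$ cannot be recovered algebraically from $B$; the way around this is precisely the reduction to the bounded invariant subspaces $V_\epsilon$, where $(1+A^2)^{1/2}$ is available as a bounded operator and the substitution above becomes rigorous.
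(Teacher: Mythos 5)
Your argument is correct in a complex Hilbert space and is built on the same decomposition as the paper: the subspace $F(B+\beta,\beta)$ for $B=A(1+A^2)^{-1}$ (your $C=B+\tfrac12$, $H_-=F(C,\tfrac12)$ is the case $\beta=\tfrac12$), with Lemma~\ref{L1}\,ii) giving invariance and Lemma~\ref{L3}\,ii) giving the sign of $B$ on $H_\pm$ exactly as in the paper. You diverge in two places. First, to show that $H_\pm$ reduce $A$ you pass through the identities $(A\mp i)^{-1}=B\pm iQ$ and the resolvent-commutation criterion; the paper instead deduces $EB=BE$ and $EQ=QE$ from Lemma~\ref{L1}\,ii), applies the resulting operator identity to $y=(A^2+1)x$ to get $EAx=AEx$ on $D(A^2)$, and extends to $D(A)$ because $D(A^2)$ is a core. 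Your shortcut is legitimate, but it invokes non-real resolvents twice (again in your final core argument for $A_\pm$), so it only works over $\C$ and forfeits the point of the paper's concluding remark~2, namely that no resolvent at non-real points is needed and the proof also covers real Hilbert spaces. Second, and more substantially, your sign transfer from $B$ to $A$ proceeds via the square root $R=(1+A^2)^{1/2}$ restricted to $F(A,\epsilon)$, followed by a density/core extension using Lemma~\ref{L4}; the paper does this in one line with the identity $\la Ax,x\ra=\la Bx,x\ra+\la BAx,Ax\ra$, valid for every $x\in D(A)$ because $Ax=(A^2+1)Bx$ and $BA\subset AB$, combined with $Ax\in H_\mp$ for $x\in D(A_\mp)$; this yields $A_-\le 0\le A_+$ on the full domains with no square roots, no restriction to $F(A,\epsilon)$, and no approximation step. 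Your route does close, but two points should be made explicit: the square root of the bounded positive operator $(1+A^2)|_{F(A,\epsilon)}$ must be produced by the classical iteration/series argument (so that no spectral theorem is smuggled in, which would be circular here), and that construction exhibits $R$ as a norm limit of polynomials in $A^2|_{F(A,\epsilon)}$ — this is what actually justifies your assertions that $R$ commutes with $A$ and leaves the closed subspaces $H_\pm\cap F(A,\epsilon)$ invariant; likewise, in the extension step it is worth noting that $P_\epsilon A_\pm\subset A_\pm P_\epsilon$ and $P_\epsilon\to I$ already give graph-norm approximation directly (or via Lemma~\ref{L4.5}), after which continuity of the quadratic form finishes the proof.
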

\vspace*{0.2cm}
\begin{proof}
We put $B = A(1+A^2)^{-1}$ and $E = P_{F(B+\beta,\beta)}$ with $\beta \geq 0$ such that $B+\beta \geq 1$, where $E$ is the projection of $H$ onto the subspace $F(B+\beta,\beta)$. Let us note, that $B$ is a bounded self-adjoint operator. \newline
From Lemma \ref{L1} ii) with A replaced by $B+\beta$ and $B$ replaced by $E$ we conclude $E B =B E$. Hence
\begin{equation}\label{eqL5.1}
  E A (A^2+1)^{-1} = E B = B E = A (A^2+1)^{-1} E = A E (A^2+1)^{-1}
\end{equation}
where we used $(A^2+1)^{-1} E = E (A^2+1)^{-1}$, being valid by Lemma \ref{L1} ii) since $B$ and $(A^2+1)^{-1}$ commute. If we drop the middle terms in \eqref{eqL5.1} and apply both sides of the resulting equation to $y = (A^2+1)x$ with $x\in D(A^2)$ we get
\begin{equation}\label{eqL5.2}
  E A x = A E x \ \ \ (x\in D(A^2))
\end{equation}
Let us extend \eqref{eqL5.2} to elements $x\in D(A)$. Since $D(A^2)$ is a core of $D(A)$
(see \cite[Theorem 3.24]{Kato1976}) there is for each $x\in D(A$) a sequence $(x_n)\subset D(A^2)$ such that $x_n \to x, A x_n \to Ax$.
Using \eqref{eqL5.2} we conclude $E x_n \to E x \mbox{ as well as \emph{}} A E x_n = E A x_n \to E A x$.
Since $A$ is a closed operator $E x \in D(A)$ and
\begin{equation}\label{eqL5.3}
  A E x = E A x  \ \ \ (x\in D(A)),
\end{equation}
which means $E A \subset A E$.
\zz
We put $H_- = R(E)$, $H_+ = R(I-E)$ which gives $H = H_-\oplus H_+$ and we remind the reader that $E = P_{H_-}$ is the projection of $H$ onto the subspace $H_-$. Because of $E A \subset A E$ the subspaces $H_-$ and $H_+$ are reducing subspaces for the self-adjoint operator $A$ and the operators
$$A_- = A|_{D(A)\cap H_-} \mbox{\ \ and \ \ } A_+ = A|_{D(A)\cap H_+}$$
are self-adjoint operators in the Hilbert spaces $H_-$ and $H_+$ with $A = A_- \oplus A_+$. See
\cite[Theorem 7.28]{Weidmann1980} for details of this facts.
\zz
Now it remains to prove $A_-\leq 0$ and $A_+\geq 0$. For all $x\in D(A)$ we have
\begin{equation*}
    Ax = (A^2+1)(A^2+1)^{-1}Ax = (A^2+1)A(A^2+1)^{-1}x = (A^2+1)Bx
\end{equation*}
which gives\ \ $\la Ax,x\ra = \la Bx,x\ra + \la A^2Bx,x\ra $\ \ and thus
\begin{equation}
    \la Ax,x\ra = \la Bx,x\ra + \la B Ax,Ax\ra  \label{qf}
\end{equation}
where we have used $B A \subset A B$. Now for $x\in H_- = F(B+\beta,\beta)$ we have because of Lemma \ref{L3} ii) with $A$ replaced by $B+\beta$ and $\lambda= 0, \mu = \beta$
\begin{equation*}
    \la(B+\beta)x,x\ra  \leq \beta \la x,x\ra
\end{equation*}
or
\begin{equation}\label{pos}
  \la Bx,x\ra  \leq 0.
\end{equation}
Hence for $x\in D(A_-)$  we have $Ax\in H_-$ and thus in view of \eqref{pos} the inequality
\begin{equation}\label{pos1}
  \la BAx,Ax\ra \leq 0
\end{equation}
holds true. Together with equation \eqref{qf} we obtain
$\la Ax,x\ra \leq 0$, i.e. $A_-\leq 0$.
\zz
In analogue way we conclude $\la Ax,x\ra \geq 0$ for all $x\in D(A_+)$, i.e. $A_+\geq 0$.
Indeed for $x\in H_+ = F(B+\beta,\beta)^\perp$ we have because of Lemma \ref{L3} ii) with $A$ replaced by $B+\beta$ and $\lambda=  \beta, \mu = \n{B+\beta}$ the inequality
\begin{equation*}
  \beta \la x,x\ra  \leq \la(B+\beta)x,x\ra
\end{equation*}
or
\begin{equation}\label{pos2}
 0 \leq \la Bx,x\ra .
\end{equation}
For $x\in D(A_+)$ its clear that $A_+x = Ax \in H_+$ and thus
\begin{equation}\label{pos3}
   0 \leq \la BAx,Ax\ra
\end{equation}
which together with \eqref{pos2} and \eqref{qf} gives $0 \leq \la Ax,x\ra $, i.e. $0 \leq A_+$.
\qed
\end{proof}
\begin{cor}\label{cor}
Suppose the self-adjoint operators $A_-$ and $A_+$ in Lemma \ref{L5} admit spectral representations
$ A_\pm = \int_\R \lambda \, d E_\pm(\lambda) $, then with $E(\lambda) = E_-(\lambda) \oplus E_+(\lambda)$ the operator $ A = A_- \oplus A_+$ admits a spectral representation
$$A = \int_\R \lambda \, d E(\lambda).$$
\end{cor}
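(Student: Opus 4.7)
The plan is to verify that the family $E(\lambda) := E_-(\lambda) \oplus E_+(\lambda)$, understood as the orthogonal sum of projections acting componentwise on the decomposition $H = H_- \oplus H_+$, is a spectral family on $H$ and that the associated integral reproduces $A$.

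First I would check the spectral-family axioms for $E$. Each $E(\lambda)$ is an orthogonal projection on $H$, since $H_-$ and $H_+$ are orthogonal and each $E_\pm(\lambda)$ is already a projection on its respective subspace. Monotonicity $E(\lambda)\leq E(\mu)$ for $\lambda\leq\mu$, strong right-continuity $E(\lambda+0)=E(\lambda)$, and the limits $E(-\infty)=0$ and $E(+\infty)=I$ all transfer componentwise from the corresponding properties of $E_\pm$. The semiboundedness $A_-\leq 0\leq A_+$ furthermore forces $E_-(\lambda)=I_{H_-}$ for $\lambda\geq 0$ and $E_+(\lambda)=0$ for $\lambda<0$, which clarifies how the two pieces fit together but is not strictly needed.

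Next I would relate the integral $\int_\R \lambda\, dE(\lambda)$ to $A$. For $x\in H$ decomposed as $x=x_-+x_+$ with $x_\pm\in H_\pm$, orthogonality gives
\begin{equation*}
\|E(\lambda)x\|^2 \;=\; \|E_-(\lambda)x_-\|^2 + \|E_+(\lambda)x_+\|^2,
\end{equation*}
so the scalar spectral measure $d\|E(\lambda)x\|^2$ splits as a sum. Hence $\int_\R \lambda^2\, d\|E(\lambda)x\|^2<\infty$ if and only if $\int_\R \lambda^2\, d\|E_\pm(\lambda)x_\pm\|^2<\infty$ for both signs, i.e.\ if and only if $x_\pm\in D(A_\pm)$; this is precisely $x\in D(A_-)\oplus D(A_+) = D(A)$. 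For such $x$, linearity and the spectral representations of $A_\pm$ yield
\begin{equation*}
\int_\R \lambda\, dE(\lambda)x \;=\; \int_\R \lambda\, dE_-(\lambda)x_- + \int_\R \lambda\, dE_+(\lambda)x_+ \;=\; A_-x_- + A_+x_+ \;=\; Ax.
\end{equation*}

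There is no genuine obstacle in this corollary; it essentially repackages two spectral representations on complementary reducing subspaces into a single one on $H$. The only point requiring mild care is verifying that the spectral-family axioms for $E(\lambda)$, built from two families originally living on different subspaces, really do survive the orthogonal summation, which is straightforward once $H_-\perp H_+$ is invoked componentwise.
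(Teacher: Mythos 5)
Your proof is correct and follows essentially the same componentwise argument as the paper: both split $x=x_-+x_+$, observe that the domain condition $\int_\R\lambda^2\,d\|E(\lambda)x\|^2<\infty$ decomposes into the corresponding conditions for $E_\pm$, and recover $Ax=A_-x_-+A_+x_+$ from the two spectral representations (the paper phrases the final identity weakly via $\la Ax,y\ra=\int_\R\lambda\,d\la E(\lambda)x,y\ra$, you via the vector-valued integral, which is an inessential difference). Your explicit verification of the spectral-family axioms for $E(\lambda)$ is a point the paper leaves tacit, but it does not change the route.
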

\begin{proof}
Let us write
\begin{gather}\label{not}
    x = x_- + x_+ ,\ \  x\in D(A),\ x_\pm \in D(A_\pm),\ \  Ax = A_-x_- + A_+x_+ \\
    A_\pm = \int_\R \lambda\, dE_\pm(\lambda) \ \ \mbox{ with spectral families }\ \   (E_\pm(\lambda)_{\lambda\in\R}.
\end{gather}
We have \ \ \   $ x\in D(A) \Leftrightarrow x_\pm\in D(A_\pm) \Leftrightarrow  \int_\R \lambda^2\,dE_\pm(\lambda) < \infty $. \z
If we define \ $E(\lambda) = E_-(\lambda)\oplus E_+(\lambda)$ \ then in view of
\begin{gather*}
    \la E(\lambda)x,y \ra = \la E_-(\lambda)x_-,y_-\ra + \la E_+(\lambda)x_+,y_+\ra \\
    \la Ax,y \ra = \la A_-x_-,y_- \ra + \la A_+x_+,y_+ \ra
\end{gather*}
we conclude
\begin{gather*}
    x\in D(A) \Leftrightarrow \int_\R \lambda^2\,dE_\pm(\lambda) < \infty \Leftrightarrow \int_\R \lambda^2\,d\,E(\lambda) < \infty . \\
    \la Ax,y \ra = \la A_-x_-,y_- \ra + \la A_+x_+,y_+ \ra  = \int_\R \lambda\, d\la E(\lambda)x,y\ra.
\end{gather*}
\qed
\end{proof}
\begin{thm} \label{T}
Every self-adjoint operator $A$ in a Hilbert space $H$ admits one and only one spectral family
$(E(\lambda)_{\lambda\in\R}$ such that
$$  A = \int_\R \lambda \, d E(\lambda) $$
\end{thm}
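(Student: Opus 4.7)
The plan is to reduce the general self-adjoint operator to semi-bounded pieces via Lemma \ref{L5}, construct the spectral family for each piece by means of the projections onto the subspaces $\Fl$, reassemble via Corollary \ref{cor}, and conclude with a standard uniqueness argument.

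For the semi-bounded case $A\geq 0$ I would define
$$E(\lambda) \;=\; \begin{cases} P_{\Fl} & \mbox{if } \lambda\geq 0,\\ 0 & \mbox{if }\lambda<0,\end{cases}$$
and check the defining properties of a spectral family. Monotonicity is immediate from the monotonicity of $\lambda\mapsto\Fl$. Right-continuity follows just as directly: if $x\in F(A,\mu)$ for every $\mu>\lambda$, then $\n{A^kx}\leq\mu^k\n{x}$ for each such $\mu$ and $k\in\N$, hence $\n{A^kx}\leq\lambda^k\n{x}$ and $x\in\Fl$. The strong limit $E(\lambda)\to 0$ as $\lambda\to-\infty$ is trivial, whereas $E(\lambda)\to I$ as $\lambda\to\infty$ is exactly the content of Lemma \ref{L4}. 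For the integral representation I would fix $n\in\N$ and a partition $0=\lambda_0<\lambda_1<\cdots<\lambda_N=n$ of $[0,n]$, form the step operator
$$S_n \;=\; \sum_{i=0}^{N-1} \lambda_i\,\bigl(E(\lambda_{i+1})-E(\lambda_i)\bigr),$$
and use Lemma \ref{L3} i) on each orthogonal slab $x_i=(E(\lambda_{i+1})-E(\lambda_i))x$, together with the $A$-invariance of these slabs guaranteed by Lemma \ref{L1} ii), to estimate $\n{Ax-S_nx}\leq\max_i(\lambda_{i+1}-\lambda_i)\,\n{x}$ for $x\in R(E(n))$. The passage from $R(E(n))$ to arbitrary $x\in D(A)$ is then handled by Lemma \ref{L4.5} applied to $P_n=E(n)$, which also yields the domain characterization $D(A)=\{x\,|\,\int_\R\lambda^2\,d\n{E(\lambda)x}^2<\infty\}$ and hence the full identity $A=\int_\R\lambda\,dE(\lambda)$.

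For the general case, Lemma \ref{L5} supplies $A=A_-\oplus A_+$ with $A_+\geq 0$ and $-A_-\geq 0$. Applying the semi-bounded construction to $A_+$ and, after reflecting $\lambda\mapsto-\lambda$, to $-A_-$ produces spectral families $E_+$ and $E_-$, and Corollary \ref{cor} glues them into a spectral family for $A$. Uniqueness I would get by a standard argument: any spectral family $E'$ representing $A$ produces the same resolvent $(A-z)^{-1}=\int_\R(\lambda-z)^{-1}\,dE'(\lambda)$ for $z\in\C\setminus\R$, and since the Herglotz function $z\mapsto\la(A-z)^{-1}x,x\ra$ determines the measure $\la E'(\cdot)x,x\ra$ through Stieltjes inversion, we conclude $E'=E$ on every half-line and hence everywhere.

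The main obstacle I anticipate is the Riemann--Stieltjes step: the convergence $S_nx\to Ax$ requires the simultaneous use of Lemma \ref{L3} (for the two-sided control $\lambda_i\n{x_i}\leq\n{Ax_i}\leq\lambda_{i+1}\n{x_i}$ on each slab), Lemma \ref{L1} ii) (so that the slabs are $A$-invariant and mutually orthogonal), and Lemma \ref{L4.5} (to extend from $R(E(n))$ to arbitrary $x\in D(A)$). Coordinating these three tools, while keeping the direct-sum bookkeeping of Corollary \ref{cor} straight and correctly mirroring the construction for the negative half $-A_-$, is where the proof has the most room to go wrong.
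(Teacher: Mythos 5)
Your plan is, in substance, the paper's own proof: $E(\lambda)=P_{\Fl}$ for the positive case, Lemma \ref{L4} to get $E(\lambda)\to I$, a partition of $[0,n]$ with orthogonal slabs $x_i$ controlled by Lemma \ref{L3}, Lemma \ref{L4.5} with $P_n=E(n)$ to pass from $R(E(n))$ to all of $D(A)$, and finally Lemma \ref{L5} together with Corollary \ref{cor} (plus the shift/reflection remark) for the general case. So the architecture is sound and matches the paper.

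One step, however, claims more than the cited lemmas deliver. On the $i$-th slab Lemma \ref{L3} gives only $\lambda_i\n{x_i}\leq\n{Ax_i}\leq\lambda_{i+1}\n{x_i}$ and $\lambda_i\la x_i,x_i\ra\leq\la Ax_i,x_i\ra\leq\lambda_{i+1}\la x_i,x_i\ra$; expanding $\n{(A-\lambda_i)x_i}^2=\n{Ax_i}^2-2\lambda_i\la Ax_i,x_i\ra+\lambda_i^2\n{x_i}^2$ then yields $\n{(A-\lambda_i)x_i}^2\leq(\lambda_{i+1}^2-\lambda_i^2)\n{x_i}^2$, not $(\lambda_{i+1}-\lambda_i)^2\n{x_i}^2$. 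Your asserted bound $\n{Ax-S_nx}\leq\max_i(\lambda_{i+1}-\lambda_i)\n{x}$ is true, but proving it directly would essentially presuppose the spectral theorem on the slab; the honest estimate is $\n{Ax-S_nx}^2\leq 2n\,\max_i(\lambda_{i+1}-\lambda_i)\,\n{x}^2$, which still tends to $0$ for fixed $n$ as the mesh shrinks, so your argument survives with this correction --- this is exactly the $2n/k$ factor appearing in the paper's estimate for $\la A^2x,x\ra$. Two smaller points: the $A$-invariance of the slabs does not come from Lemma \ref{L1} ii) (which presupposes a bounded commuting operator) but from Lemma \ref{L1} i) plus hermiticity, namely $Ax_i\perp F(A,\lambda_i)$ because $A$ maps $F(A,\lambda_i)$ into itself; and your uniqueness argument via non-real resolvents and Stieltjes inversion is fine in a complex Hilbert space, but the paper deliberately avoids non-real $\lambda$ (citing the 1979 uniqueness proof) so that the theorem also covers real Hilbert spaces.
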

\begin{proof}
{\it Uniqueness.}\ The proof of the uniqueness of the representing spectral family is given in
\cite[Theorem 1]{Leinfelder1979}. \zz
{\it Existence.}\ We first prove the spectral theorem for positive self-adjoint operators $A>0$. We define a family of projections by setting
\begin{equation}\label{fam}
    E(\lambda) = P_{\Fl} \ ( \lambda\in\R).
\end{equation}
Notice that $E(\lambda) = 0$ if $\lambda \leq 0$. It is not difficult to check that $(E(\lambda))_{\lambda\in\R}$ is actually a spectral family. The only nontrivial point is the property $\lim_{\lambda\to\infty}E(\lambda) = I$. But this follows from Lemma \ref{L4}. So it remains to show the validity of the formula
\begin{equation}\label{spec}
     A = \int_\R \lambda \, d E(\lambda).
\end{equation}
We take $n\in\N$, $x\in F(A,n)$ and fix these elements. Also for fixed $k\in\N$ we define $\lambda_i = \frac{i}{k}$ for all $i \in \{0,1,2,\dots,n k\}$. With
\begin{equation}\label{def1}
    x_i = E(\lambda_i)x - E(\lambda_{i-1})x \in F(A,\lambda_i)\cap F(A,\lambda_{i-1})^\perp \mbox{ for all } 1\leq i \leq nk.
\end{equation}
we have
\begin{equation*}
    x = \sum_{i=1}^{nk} x_i  \mbox{\ \ and \ \ } \n{x}^2 = \sum_{i=1}^{nk} \n{x_i}^2,
\end{equation*}
since $\la x_i,x_j\ra = 0$ if $i\neq j$. \z
We use Lemma \ref{L3} to obtain the following inequalities.
\begin{gather}
    \label{est1}
    \lambda_{i-1}\la x_i,x_i\ra \leq \la Ax_i,x_i \ra \leq \lambda_i\la x_i,x_i\ra \ \ \ (1\leq i\leq n k) \\
    \label{est2}
    \lambda^2_{i-1}\la x_i,x_i\ra \leq \la A^2x_i,x_i \ra \leq \lambda^2_i\la x_i,x_i\ra \ \ \ (1\leq i\leq n k)
\end{gather}
hence
\begin{gather}
    \label{est3}
    |\la(A-\lambda_i)x_i,x_i\ra| \leq (\lambda_i -\lambda_{i-1})\ \n{x_i}^2 \leq \frac{1}{k}\ \n{x_i}^2 \ \ \ (1\leq i\leq n k) \\
    \label{est4}
    |\la(A^2-\lambda^2_i)x_i,x_i\ra| \leq (\lambda_i^2 -\lambda_{i-1}^2)\ \n{x_i}^2 \leq   \frac{2n}{k}\ \n{x_i}^2 \ (1\leq i\leq n k)
\end{gather}
The identity \ $\la A x,x \ra  =  \sum_{i=1}^{n k} \la Ax_i,x_i \ra$\ and equation \eqref{est3} gives
\begin{gather*}
    \left|\la Ax,x \ra - \sum_{i=1}^{n k} \lambda_i\la x_i,x_i \ra\right| = \left|\sum_{i=1}^{n k} \la (A-\lambda_i)x_i,x_i \ra\right| \leq \\
    \sum_{i=1}^{n k} \left|\la (A-\lambda_i)x_i,x_i \ra\right| \leq \frac{1}{k} \sum_{i=1}^{nk}\ \n{x_i}^2 =
     \frac{1}{k}\ \n{x}^2
\end{gather*}
A similar estimate (using \eqref{est4}) holds for $A^2$ so that we have the following set of estimates
\begin{gather}\label{est5}
  \left|\la Ax,x \ra - \sum_{i=1}^{n k}\lambda_i \la x_i,x_i \ra\right| \leq  \frac{1}{k}\ \n{x}^2 \\
  \left|\la A^2x,x \ra - \sum_{i=1}^{n k} \lambda^2_i\la x_i,x_i \ra\right| \leq  \frac{2n}{k}\ \n{x}^2
\end{gather}
For fixed $n\in\N$ we let tend $k\to\infty$ and obtain for $x\in F(A,n)$
\begin{gather}\label{STna}
    \la Ax,x\ra = \int_0^n \lambda\ d\la E(\lambda)x,x\ra =\int_{[0,n]}\lambda\ d\la E(\lambda)x,x\ra  \\
    \n{Ax}^2 = \la A^2x,x \ra = \int_0^n\lambda^2\, d\la E(\lambda)x,x\ra =\int_{[0,n]}\lambda^2\, d\la E(\lambda)x,x\ra  \label{STnb}
\end{gather}
Notice that the first integrals in \eqref{STna}, \eqref{STnb} are Riemann-Stieltjes integrals and the second ones are Lebesgue-Stieltjes integrals. \z
Now we take an arbitrary $x\in D(A)$, put $P_n = E(n)$ then $P_n x\in F(A,n)$ and thus using  \eqref{STna} and
\eqref{STnb} we get
\begin{gather}\label{STna1}
    \la AP_n x,P_n x\ra = \int_{[0,n]}\lambda\ d\la E(\lambda)P_n x,P_n x\ra  = \int_{[0,n]}\lambda\ d\la E(\lambda)x,x\ra  \\
    \n{AP_n x}^2 = \la A^2x,x \ra = \int_{[0,n]}\lambda^2\, d\la E(\lambda)x,x\ra  \label{STnb1}
\end{gather}
since $\ P_n E(\lambda)P_n = E(\lambda)\ $\ for\ $0 \leq \lambda \leq n$. In a last step we apply Lemma \ref{L4.5} together with \eqref{STna1} and \eqref{STnb1} to get
\begin{equation*}
   A = \int_\R \lambda\ dE(\lambda),
\end{equation*}
 i.e. for exactly $x\in D(A)$ we have
\begin{gather}\label{STsa}
    \n{Ax}^2 = \int_{\R}\lambda^2\, d\la E(\lambda)x,x\ra\ <\ \infty , \\
    \la Ax,x\ra = \int_{\R}\lambda\ d\la E(\lambda)x,x\ra  \label{STsb}.
\end{gather}
Notice that actually $ E(\lambda) = 0 $ for all $\lambda \leq 0$.\zz
Now let us extend the validity of a spectral representation to arbitrary self-adjoint operators.
First we remind the reader of the simple fact that if a self-adjoint operator $A$ has a spectral representation
$\ A = \int_{\R} \lambda\ d\,E(\lambda)\ $ then the families
\begin{equation*}
   F(\lambda) := E(\lambda - c)) \mbox{\ \  as well as\ \ } G(\lambda) := I - E(-\lambda)\ \ \ \ \ (\lambda\in\R)
\end{equation*}
are spectral resolutions for $ A+c$ and $ -A $ respectively. Notice that $G(\lambda)$ is actually left-continuous. So to be formally correct one has in fact to chose the right-continuous spectral family $(G(\lambda+))_{\lambda\in\R}$. \newline
We know now that every semi-bounded self-adjoint operator admits a spectral representation. In a last step we apply Lemma \ref{L5} and Corollary \ref{cor} to guarantee the existence of a spectral representation for all self-adjoint operators. \qed
\end{proof}
\ddddd
\conrems \ 1. Let us mention that \eqref{STsb} can be extended (by using the polarization identity) to
\begin{equation}\label{STsc}
  \la Ax,y\ra = \int_{\R}\lambda\ d\la E(\lambda)x,y\ra\ \ \ \ \ (x,y\in D(A)) .
\end{equation}
Even elements $y\in H$ are allowed in \eqref{STsc} (by a specific interpretation of the integral), but we will not prove this here.
\zz
2. As a consequence of the proof of Lemma \ref{L4} the spectral theorem is now
proven directly in real as well as in complex Hilbert spaces. (The existence of a resolvent $R(A,\lambda)$ with {\bf non-real} $\lambda$ is no more needed!)
%
% \bib, bibdiv, biblist are defined by the amsrefs package.
\begin{bibdiv}
\begin{biblist}

\bib{Kato1976}{book}{
      author={Kato, T.},
       title={Perturbation theory for linear operators},
      series={Grundlehren der mathematischen Wissenschaften},
   publisher={Berlin, Heidelberg, New York: Springer},
        date={1976},
      volume={{\bf 132}},
}

\bib{Leinfelder1979}{article}{
      author={Leinfelder, H.},
       title={A {G}eometric {P}roof of the {S}pectral {T}heorem for
  {S}elf-{A}djoint {U}nbounded {O}perators},
        date={1979},
     journal={Math. Ann.},
      volume={{\bf 242}},
       pages={85\ndash 96},
}

\bib{Weidmann1980}{book}{
      author={Weidmann, J.},
       title={{L}inear {O}perators in {H}ilbert {S}paces},
      series={Graduate Texts in Mathematics},
   publisher={New York, Heidelberg, Berlin: Springer},
        date={1980},
      volume={{\bf 68}},
}
\end{biblist}
\end{bibdiv}

\end{document}